\def\jobis#1{FF\fi
  \def\predicate{#1}%
  \edef\predicate{\expandafter\strip@prefix\meaning\predicate}%
  \edef\job{\jobname}%
  \ifx\job\predicate
}
\if\jobis{proposal}%
\DeclareMathOperator{\Supp}{Supp}
 \numberwithin{equation}{subsection}
 \numberwithin{footnote}{subsection}
 \newtheorem{prop}[subsection]{Proposition}
 \newtheorem{thm}[subsection]{Theorem}
 \newtheorem{conj}[subsection]{Conjecture}
    \newtheoremstyle{upright}%
        {8pt plus2pt minus4pt}%
        {8pt plus2pt minus4pt}%
        {\upshape}%
        {}%
        {\bfseries\scshape}%
        {}%
        {1em}%
        {}%
\theoremstyle{upright}
 \newtheorem{exa}[subsection]{Example}
 \newtheorem{rem}[subsection]{Remark}
 \newcommand{\x}{\mathscr}
 \newcommand{\C}{\mathbb C}
 \newcommand{\PP}{\mathbb P}
 \newcommand{\Q}{\mathbb Q}
 \newcommand{\R}{\mathbb R}
 \newcommand{\Z}{\mathbb Z}
 \newcommand{\bir}{\dashrightarrow}
 \newcommand{\rddown}[1]{\left\lfloor{#1}\right\rfloor} 
\title{On the moduli part of the Kawamata-Kodaira canonical bundle formula}
\author{Caucher Birkar \hspace{2cm} Yifei Chen}
\date{\today}
\begin{document}
\maketitle

\begin{abstract}
It is conjectured that the moduli b-divisor of 
the Kawamata-Kodaira canonical bundle formula associated to a klt-trivial fibration 
$(X,B)\to Z$ is semi-ample. 
In this paper, we show the semi-ampleness of an arbitrarily small perturbation of 
the moduli b-divisor by a fixed appropriate divisor which roughly speaking comes from a section of 
$K_X+B$.
 
We apply the above result to settle a conjecture of Fujino 
and Gongyo: if  $f\colon X\to Z$ is a smooth surjective morphism of smooth 
projective varieties with $-K_X$ semi-ample, then $-K_Z$ is also semi-ample.  
We list several counter-examples to show that this fails without the smoothness assumption on $f$.
\end{abstract}



\section{Introduction}

\textbf{The semi-ampleness conjecture.}
We work over $\C$.
Let $f\colon X\to Z$ be a contraction of normal projective varieties, and
$(X,B)$ klt such that $K_X+B\sim_\Q 0/Z$, i.e. $K_X+B\sim_\Q f^*N$ for
some $\Q$-Cartier $\Q$-divisor $N$. Such a contraction is called a \emph{klt-trivial fibration}.
By a construction of Kawamata \cite{ka97},\cite{ka98} we have
a decomposition
$$
N\sim_\Q K_Z+B_Z+M_Z
$$
where
$B_Z$ is defined using the singularities of $(X,B)$ and of the fibres of $f$ over the codimension one 
points of $Z$, and $(Z,B_Z)$ is klt
if $K_Z+B_Z$ is $\Q$-Cartier. The part $B_Z$ is called the \emph{discriminant part} and the
part $M_Z$ is called the \emph{moduli part}. More precisely, $B_Z$ is defined as follows: 
for each prime divisor $Q$ on $Z$, let $t$ be the lc threshold of $f^*Q$ over the 
generic point of $Q$, with respect 
to the pair $(X,B)$; then let $(1-t)$ be the coefficient of $Q$ in $B_Z$. Except for 
finitely many $Q$, $t=1$ hence $B_Z$ has finitely many components.

Consider a commutative diagram
$$
\xymatrix{
X' \ar[r]^{\tau} \ar[d]^{f'} &  X\ar[d]^{f}\\
Y \ar[r]^{\sigma} & Z }
$$
in which $X',Y$ are normal and projective, $\sigma,\tau$ are birational, and $f'$ is a contraction. 
Let $K_{X'}+B':=\tau^*(K_X+B)$ where $B'$ might have negative coefficients. 
Using the relation $K_{X'}+B'\sim_\Q 0/Y$, we can similarly define a decomposition 
$$
\sigma^*N\sim_\Q K_{Y}+B_{Y}+M_{Y}
$$
where:  $B_Z=\sigma_*B_{Y}$ and $M_Z=\sigma_*M_{Y}$. This means that the moduli part is a b-divisor. 
In general, $B_Y$ may have negative coefficients.

Kawamata \cite{ka98} showed that $M_{Y}$ is nef if $Y$ is a sufficiently high resolution.
Ambro \cite{am04} proved that the moduli part is a b-Cartier divisor, i.e.
we can find a resolution $Y\to Z$  so that
for any other resolution $Y'\to Y$ the moduli part $M_{Y'}$ is just the pullback of $M_{Y}$.
Moreover,  Ambro \cite{am05} proved that $M_Y$ is actually
the pullback of some nef and big divisor for some contraction $g\colon Y\to T$.
We call $Y$ an \emph{Ambro model.}

\begin{conj}[cf. \cite{ka97}, \cite{am99},{\cite{psh09}}]
If $Y$ is an Ambro model, then $M_{Y}$ is semi-ample.
\end{conj}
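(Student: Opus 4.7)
The plan is to combine Ambro's descent result with the perturbation theorem proved in this paper and then fight for rationality. By Ambro \cite{am05} there is a contraction $g\colon Y\to T$ and a nef and big $\Q$-divisor $H$ on $T$ with $M_Y\sim_\Q g^*H$, so it suffices to prove that $H$ is semi-ample on $T$. The first step therefore is to pass to $T$: one would like to equip $T$ with an auxiliary boundary $\Delta_T$ so that $(T,\Delta_T)$ is klt and $H-(K_T+\Delta_T)$ is nef and big, which would allow a direct application of the base point free theorem. A natural candidate for $\Delta_T$ is produced by applying the canonical bundle formula again to $g\colon Y\to T$, using the pair $(Y,B_Y)$ and the relation $K_Y+B_Y\sim_\Q -M_Y+\sigma^*N$ over $T$; provided this secondary canonical bundle formula yields a generalized klt structure on $T$, this reduction goes through.

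The second step is to supply the necessary positivity on $T$. Here one would invoke the main theorem of the present paper: after a birational modification one has a divisor $A$ coming from a section of $K_X+B$ such that $M_Y+\epsilon A$ is semi-ample for every sufficiently small rational $\epsilon>0$. Pushing forward, one obtains that $H+\epsilon A_T$ is semi-ample on $T$ for an explicit divisor $A_T$ and all small $\epsilon>0$, while $H$ itself is nef and big. The strategy is to use these perturbed semi-ample classes to produce sections of $mH$ for $m$ divisible enough: one fixes a common resolution of base loci, studies how the stable base locus of $H+\epsilon A_T$ behaves as $\epsilon\to 0$, and uses Zariski decomposition together with Kollár-type effective non-vanishing to force actual sections of $H$.

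The third step is a bootstrapping/Noetherian induction. Restricting everything to the stable base locus of $H$ (if nonempty) yields a new klt-trivial fibration in lower dimension, to which the conjecture applies by induction on $\dim Z$; combined with an extension argument of Kawamata type, one lifts the resulting sections back to $T$ to conclude semi-ampleness of $H$, hence of $M_Y$.

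The main obstacle is unambiguously the passage from the perturbed statement to the unperturbed one: semi-ampleness is not a closed condition, so the family $\{H+\epsilon A_T\}_{\epsilon>0}$ does not automatically produce a semi-ample limit. To make this work one either needs a uniform bound on the denominators realizing semi-ampleness of $H+\epsilon A_T$ (which would give rationality and hence a literal limiting section), or one needs an independent input, for instance an Iitaka fibration argument on $H$ that identifies its abundance with that of a moduli part in strictly lower relative dimension. I expect the secondary canonical bundle formula step to be delicate as well, since controlling the discriminant and moduli parts of $g\colon Y\to T$ requires precise information about how $B_Y$ interacts with the fibres of $g$; without it, the klt hypothesis needed for the base point free theorem is not available.
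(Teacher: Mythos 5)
This statement is a \emph{conjecture}, not a theorem of the paper: the authors state explicitly that they ``are not able to prove the semi-ampleness of $M_Y$'' and that only very special cases are known ($M_Y\equiv 0$, relative dimension one, or abelian geometric generic fibre). What the paper actually proves (Theorem \ref{t-main} / Proposition \ref{p-aux}) is the strictly weaker statement that $\alpha M_W+G$ is semi-ample for $\alpha\gg 0$, where $G$ is a \emph{fixed} effective divisor coming from a section of $K_Y+\Delta_Y+bM_Y$; dividing by $\alpha$, this is semi-ampleness of $M_W+\tfrac{1}{\alpha}G$, a perturbation that does not tend to $M_W$ in any way that preserves semi-ampleness. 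So your proposal cannot be compared against a proof in the paper, and as written it is not a proof either.

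The decisive gap is the one you yourself flag: semi-ampleness is not closed under limits, and a family of semi-ample classes $M_W+\epsilon G$ ($\epsilon\to 0$) only gives nefness of $M_W$ in the limit, which is already known by Kawamata. Neither of your proposed repairs is available: there is no known uniform bound on the denominators (indeed, producing such a bound is essentially equivalent to the conjecture), and the ``Iitaka fibration in lower relative dimension'' input is not supplied by anything in the paper. Your first step is also unjustified: to run the base point free theorem on $T$ you need $H-(K_T+\Delta_T)$ nef and big for some klt (or generalized klt) $\Delta_T$, but $M_Y$ is defined by $\sigma^*N\sim_\Q K_Y+B_Y+M_Y$, so $M_Y$ is of ``anti-adjoint'' type relative to $Y\to Z$, and applying the canonical bundle formula to $g\colon Y\to T$ gives no reason for $H$ to dominate an adjoint divisor on $T$; this is exactly why the problem is hard. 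Finally, the Noetherian induction on the stable base locus does not produce a klt-trivial fibration in lower dimension, so the inductive step has no content. In short: the obstacles you list in your last paragraph are not technical points to be smoothed over later; they are the open problem itself, and the paper deliberately stops short of it by proving only the perturbed statement.
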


This seems to be a difficult conjecture. Only some very special cases are 
known:  when $M_Y\equiv 0$ \cite{am05}, when  $\dim X=\dim Z+1$ \cite{ka97}\cite{psh09}, 
and when the geometric generic fibre of $f$ is birationally an abelian variety \cite{f03}.\\

\textbf{Results of this paper.}
We are not able to prove the semi-ampleness of $M_Y$ but we prove semi-ampleness of 
a "nearby" divisor. This would be enough for some applications. 
To state our main result concerning $M_Y$ we need to introduce one more notation. 
Since $(Y,B_Y)$ is log smooth and each coefficient of $B_Y$ is strictly
smaller than $1$, there is an exceptional$/Z$ $\Q$-divisor
$E\ge 0$ on $Y$ such that $(Y,\Delta_{Y})$ is klt where $\Delta_{Y}:=B_{Y}+E\ge 0$.

\begin{thm}\label{t-main}
Under the above notation and assumptions, suppose that  we have $\kappa(K_Y+\Delta_Y/T)\ge 0$, 
i.e. suppose that there is a rational number $b\ge 0$
such that
$$
K_Y+\Delta_Y+bM_Y\sim_\Q D_Y ~~\mbox{for some $D_Y\ge 0$}
$$
Then, we can choose $D_Y\ge 0$ in its $\Q$-linear equivalence class
so that there exist a resolution $\phi\colon W\to Y$ and
a $\Q$-divisor $G$ on $W$ such that

$\bullet$ $0\le G\le \phi^*D_Y$, 

$\bullet$ $\alpha M_{W}+G=\alpha \phi^*M_Y+G$ is semi-ample for any $\alpha\gg 0$, and

$\bullet$ $R(W,lG)\simeq R(Y,lD_Y)$ for some integer $l>0$.
\end{thm}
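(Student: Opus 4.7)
My plan is to apply the log base point free theorem to $L := \alpha M_W + G$, after setting up a log resolution $\phi\colon W \to Y$, a klt boundary $\Delta_W$ on $W$, and a $\Q$-divisor $G$ tailored to satisfy all three required conditions simultaneously.

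First, take a sufficiently high log resolution $\phi\colon W \to Y$ of the pair $(Y, \Delta_Y + D_Y)$ which also resolves the base locus of $|l_0 D_Y|$ for some divisible $l_0 > 0$. Since $(Y, \Delta_Y)$ is klt, setting $\Delta_W := \phi_*^{-1}\Delta_Y + \sum(1-\epsilon_i)E_i$ along the exceptional divisors $E_i$ with small $\epsilon_i > 0$ yields a klt pair $(W, \Delta_W)$ whose discrepancy formula reads
\[
K_W + \Delta_W \sim_\Q \phi^*(K_Y+\Delta_Y) + F, \qquad F \geq 0,\ \phi\text{-exceptional}.
\]
Combined with $K_Y + \Delta_Y + bM_Y \sim_\Q D_Y$, this gives $K_W + \Delta_W + bM_W \sim_\Q \phi^*D_Y + F$. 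Let $l_0 \phi^*D_Y = M_{l_0} + F_{l_0}$ be the mobile-fixed decomposition on $W$ and set $G := M_{l_0}/l_0$. Then $0 \leq G \leq \phi^*D_Y$; after replacing $D_Y$ by a general member of $|l_0D_Y|$ within its $\Q$-linear equivalence class and enlarging $\phi$ if necessary, one obtains $R(W, lG) \simeq R(Y, lD_Y)$ for $l = l_0$ (or a divisible multiple) via the stabilization of mobile parts.

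For semi-ampleness of $L$, set $N := \phi^*D_Y - G \geq 0$, so that
\[
\alpha M_W + G - (K_W + \Delta_W) \sim_\Q (\alpha+b)M_W - F - N.
\]
To feed this into the base point free theorem one needs a klt pair $(W, \Delta'_W)$ with $\alpha M_W + G - (K_W + \Delta'_W)$ nef and big. Since $M_W = (g\phi)^*H$ and $H$ is nef and big on $T$ (as $Y$ is an Ambro model), Kodaira's lemma gives $H \sim_\Q A_T + E_T$ with $A_T$ ample and $E_T \geq 0$, hence $M_W \sim_\Q (g\phi)^*A_T + (g\phi)^*E_T$. The plan is then to augment $\Delta_W$ by small effective multiples of $(g\phi)^*E_T$ and of an effective representative of $m(g\phi)^*A_T$ (for large $m$), chosen so that $(W, \Delta'_W)$ remains klt while absorbing $F + N$ into the boundary; the residual right-hand side then acquires a clean nef-and-big form dominated by $(g\phi)^*A_T$ for $\alpha \gg 0$, and the log base point free theorem yields semi-ampleness of $L$.

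The main obstacle is that $M_W$ is merely nef and generally fails to be big on $W$ when $\dim Y > \dim T$, so the needed bigness must be imported from $H$ on $T$ via the factorization through the Ambro model. Balancing simultaneously (i) the klt property of the augmented boundary, (ii) absorption of the divisor $F + N$ without destroying nefness—particularly delicate since $N$ is not $\phi$-exceptional and may meet fibers of $g\phi$ in problematic directions, and (iii) preservation of the section ring identity $R(W, lG) \simeq R(Y, lD_Y)$, is the technical heart of the argument and constrains the joint choice of $D_Y$, $l_0$, $\phi$, and the augmentation of $\Delta_W$.
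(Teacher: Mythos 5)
Your strategy---apply the base point free theorem to $L=\alpha M_W+G$ after absorbing the error terms into a klt boundary---runs into an obstruction that you yourself flag but do not overcome, and it is fatal. The base point free theorem needs $L-(K_W+\Delta'_W)$ to be nef \emph{and big}. Your computation gives $(\alpha+b)M_W-F-N$ on the right-hand side, and $M_W$ is the pullback of the nef and big divisor $M_T$ along $W\to T$; when $\dim Y>\dim T$ no such pullback is big on $W$, and the same is true of $(g\phi)^*A_T$ after applying Kodaira's lemma on $T$. Importing bigness ``from $H$ on $T$'' is therefore impossible: every divisor you can manufacture this way is trivial on the fibres of $W\to T$, hence has numerical dimension at most $\dim T<\dim W$. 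The actual proof resolves exactly this difficulty by first applying the Fujino--Mori canonical bundle formula to the relative Iitaka fibration of $K_Y+\Delta_Y$ over $T$, producing a lower-dimensional base $S\to T$ with a klt pair $(S,\Delta_S)$ and nef $L_S$ such that $K_S+\Delta_S+L_S$ is big over $T$; only there does adding a multiple of $M_S=\theta^*M_T$ yield a big klt adjoint divisor, to which BCHM (an MMP producing a good minimal model, not the bare base point free theorem) can be applied. A length-of-extremal-rays argument then shows one MMP works simultaneously for all $\alpha\gg 0$, which is needed since you must get semi-ampleness for every large $\alpha$, not just one.

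There is a second, related gap in your treatment of the section ring. Defining $G$ as $M_{l_0}/l_0$ from the mobile--fixed decomposition of $l_0\phi^*D_Y$ gives $H^0(W,l_0G)\simeq H^0(Y,l_0D_Y)$ for that single $l_0$, but the isomorphism $R(W,lG)\simeq R(Y,lD_Y)$ of graded rings requires the mobile parts to stabilize, which presupposes finite generation of $R(Y,D_Y)$ (equivalently, the existence of a good model for $D_Y$ in the relevant sense). That is not available a priori here: $D_Y\sim_{\Q}K_Y+\Delta_Y+bM_Y$ and $\Delta_Y+bM_Y$ need not be big, so BCHM does not apply directly on $Y$. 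In the paper this is precisely what the descent to $S$ and the MMP on $K_S+\Gamma_{S,\lambda}$ deliver: $G$ is taken to be the pullback of the nef divisor $D_{\overline S}$ on the good minimal model $\overline S$, from which both the ring isomorphism and the inequality $G\le\phi^*D_Y'$ (for a suitably chosen representative $D_Y'$, via the negativity lemma) follow. Without some substitute for this reduction, neither the semi-ampleness claim nor the section ring claim in your outline can be completed.
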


Here $R(-)$ stands for the section ring (see Section 2 for precise definition). 
The proof is based on the minimal model program.
An interesting special case of the theorem is when $K_X+B\sim_\Q 0$ in which case
we take $D_Y=E$.
Even this special case is strong enough to imply the following result which
was conjectured by Fujino and Gongyo \cite{fg11} (see also \cite{fg12}).

\begin{thm}\label{t-FG-conj}
Let $f\colon X\to Z$ be a smooth surjective morphism of smooth projective varieties.
If $-K_X$ is semi-ample, then $-K_Z$ is also semi-ample.
\end{thm}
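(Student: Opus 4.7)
The strategy is to reduce Theorem~\ref{t-FG-conj} to the special case of Theorem~\ref{t-main} highlighted in the introduction (namely $K_X+B\sim_\Q 0$, so $D_Y=E$). Since $-K_X$ is semi-ample, pick $m\gg 0$ with $|-mK_X|$ base-point free, and let $B:=\tfrac{1}{m}D$ for a general $D\in|-mK_X|$. Then $K_X+B\sim_\Q 0$ and, for $m$ large, $(X,B)$ is klt by Bertini. Thus $f\colon X\to Z$ becomes a klt-trivial fibration with trivial log canonical class, and Kawamata's formula yields $K_Z+B_Z+M_Z\sim_\Q 0$.

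I next verify that $B_Z=0$. For any prime divisor $Q\subset Z$, the smoothness of $f$ makes $f^*Q$ a reduced Cartier divisor, smooth over the generic point of $Q$. Since $B$ is general, it shares no component with $f^*Q$, and as $(X,B)$ is klt, the pair $(X,B+f^*Q)$ is lc at the generic point of $Q$, so the lc threshold is $t=1$ and the coefficient $1-t=0$ of $Q$ in $B_Z$ vanishes. Hence $B_Z=0$ and $-K_Z\sim_\Q M_Z$, so it suffices to prove $M_Z$ is semi-ample on $Z$.

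Apply Theorem~\ref{t-main}. The relation $K_X+B\sim_\Q 0$ pulls back to $K_Y+B_Y+M_Y\sim_\Q 0$ on $Y$, so
$$
K_Y+\Delta_Y+M_Y=K_Y+B_Y+E+M_Y=E\ge 0,
$$
allowing $b=1$ and $D_Y=E$. The theorem then supplies a resolution $\phi\colon W\to Y$ and a $\Q$-divisor $G$ with $0\le G\le\phi^*E$ such that $\alpha M_W+G$ is semi-ample for $\alpha\gg 0$ and $R(W,lG)\simeq R(Y,lE)$. Observe that $R(Y,lE)\simeq\C[t]$: since $E$ is $\sigma$-exceptional and effective, we have $\sigma_*\mathcal{O}_Y(mlE)=\mathcal{O}_Z$, hence $H^0(Y,mlE)=\C$ for every $m\ge 0$.

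The remaining and most delicate step is to descend the semi-ampleness of $\alpha M_W+G$ on $W$ to that of $\alpha M_Z$ on $Z$. Let $\psi:=\sigma\circ\phi\colon W\to Z$. Since $E$ is $\sigma$-exceptional, $G$ is $\psi$-exceptional; by the negativity lemma applied to the $\sigma$-exceptional divisor $\sigma^*M_Z-M_Y$, the difference $F':=\psi^*M_Z-M_W$ is effective and $\psi$-exceptional. Writing $\alpha M_W+G=\psi^*(\alpha M_Z)+(G-\alpha F')$ with the correction $\psi$-exceptional, the negativity lemma (combined with global nefness of $\alpha M_W+G$) forces $G-\alpha F'\le 0$ for $\alpha\gg 0$. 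I then plan to use the triviality of $R(W,lG)$ together with push-forward of sections via $\psi$ to show that the morphism $h\colon W\to V$ defined by $|k(\alpha M_W+G)|$ factors as $W\xrightarrow{\psi}Z\to V$, making $\alpha M_Z$ the pullback of an ample divisor on $V$ and hence semi-ample. The hard part is precisely this last factorization: one must rule out, via negativity and the boundedness $G\le\phi^*E$, that the $\psi$-exceptional correction $G-\alpha F'$ creates base-points on $\psi$-exceptional loci that do not come from $Z$.
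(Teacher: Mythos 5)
Your reduction (choosing $B=\tfrac1m D$ with $K_X+B\sim_\Q 0$, passing to an Ambro model, and applying Theorem \ref{t-main} with $b=1$ and $D_Y=E$) matches the paper's setup, but there are two genuine gaps. First, your claim that $B_Z=0$ globally is not justified: the assertion ``as $(X,B)$ is klt, the pair $(X,B+f^*Q)$ is lc at the generic point of $Q$'' is false as a general implication --- adding a reduced divisor to a klt pair need not give an lc pair. What is needed is that $(f^*Q_\eta, B|_{f^*Q_\eta})$ be lc, which for a general $D$ holds over a dense open subset of $Z$ but is not automatic over every prime divisor $Q$ (think of $Q$ inside the non-smooth locus of $\Supp D\to Z$). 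You also skip the Stein factorisation needed to make $f$ a contraction before invoking the canonical bundle formula. Second, and more seriously, the final descent step --- which you yourself flag as ``the hard part'' --- is exactly where the essential idea is missing, and your plan for it does not work as stated. From $0\le G\le\alpha F'$ you can only conclude that $\psi^*(\alpha M_Z)$ is the sum of the semi-ample divisor $\alpha M_W+G$ and the effective $\psi$-exceptional divisor $\alpha F'-G$; this bounds the stable base locus of $M_Z$ by $\psi(\Supp(\alpha F'-G))$, a fixed proper closed subset that need not be empty, and there is no reason for the semi-ample fibration of $\alpha M_W+G$ to contract all $\psi$-exceptional curves, i.e. no reason for the factorisation through $Z$ that you hope for.

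The paper circumvents both problems by a pointwise argument that your proposal lacks. Fix $z\in Z$ and choose $D\in|-mK_X|$ so that $\Supp B$ is relatively smooth over a neighbourhood $U$ of $z$. Then $B_Z=0$ over $U$ (which is all that is needed), $M_Y=\sigma^*M_Z$ over $U$ (so your $F'$ vanishes there), and $G$, being effective, exceptional over $U$ and semi-ample over $U$, is zero over $U$ by the negativity lemma. A general member $P_W\sim_\Q\alpha M_W+G$ then contains no component of the fibre over $z$ and, being numerically trivial on that fibre, misses it entirely; hence $\alpha M_Z\sim_\Q P_Z\ge 0$ with $z\notin\Supp P_Z$, and since $z\notin\Supp B_Z$ as well, $z$ is not in the stable base locus of $-K_Z\sim_\Q B_Z+M_Z$. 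As $z$ was arbitrary, $-K_Z$ is semi-ample. This localisation at $z$, with $B$ chosen \emph{depending on} $z$, is the missing ingredient; without it your argument only shows that the stable base locus of $-K_Z$ lies in the image of the exceptional locus.
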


Fujino-Gongyo proved the conjecture when $-K_X$ is big. They also proved some other special cases 
(see Remark 4.2 and Theorem 4.4 of \cite{fg11}).
The theorem does not hold  without the smoothness assumption on $f$ by a counter-example due to H. Sato 
(see Section 4 where we give additional counter-examples).\\

 \textbf{Relevant results.}
There are results similar to Theorem \ref{t-FG-conj} in the literature. We mention some 
of them for completeness. Let 
$f\colon X\to Z$ be a smooth surjective morphism of smooth projective varieties. 
Then, it is known that (see \cite{fg11}, \S 5 for more details):\\

$\bullet$ if $-K_X$ is nef, then $-K_Z$ is nef (Miyaoka \cite{m93})

$\bullet$ if $-K_X$ is ample, then $-K_Z$ is ample (Koll\'ar-Miyaoka-Mori \cite{kmm92})

$\bullet$ if $-K_X$ is nef and big, then $-K_Z$ is nef and big (Fujino-Gongyo \cite{fg11}). \\

To prove Theorem \ref{t-FG-conj}, we do not rely on these known results but we use ideas 
of \cite{fg11}.

In general,  varieties with nef anticanonical divisor have interesting properties. 
Let $X$ be a smooth projective variety with $-K_X$ nef. Then, for any surjective morphism 
$g\colon X\to Y$ with $Y$ smooth projective,  Zhang \cite{zh96} proved that either $Y$ is uniruled or $\kappa(Y)=0$.
On the other hand,  Lu, Tu, Zhang, and Zheng \cite{ltzz10} 
proved that the Albanese map $X\to A$ is semi-stable (it is conjectured that this map should be smooth).\\

{\textbf{Acknowledgements.} We would like to thank Osamu Fujino and Florin Ambro for their 
comments and answering our questions. This work started when Caucher was 
visiting Yifei at the Mathematics Institute of the Chinese Academy of Sciences
in Beijing; he would like to thank them for their hospitality. 
This work was completed when Yifei was visiting Caucher in Cambridge and he
would like to thank Caucher and his family for the hospitality.
Both authors were supported by a Leverhulme grant.

\section{Preliminaries}

\textbf{Pairs.} We work over $k=\C$. 
A \emph{pair} $(X,B)$ consists of a normal quasi-projective variety $X$ over $k$ and 
a $\Q$-divisor $B$ on $X$ with
coefficients in $[0,1]$ (called a boundary) such that $K_X+B$ is $\mathbb{Q}$-Cartier. 
A pair $(X,B)$ is called \emph{Kawamata log terminal} (klt) if for any 
projective birational morphism $g\colon Y\to X$ from a normal variety $Y$, every coefficient 
of $B_Y$ is less than one where $K_Y+B_Y:=g^*(K_X+B)$. 
For basic properties of singularities and other aspects of birational geometry 
we refer the reader to \cite{km98}.

\textbf{Minimal models.}
Let $X$ be a normal projective variety and $D$ a $\Q$-Cartier $\Q$-divisor on $X$. 
A normal projective variety $Y$  
is called a \emph{minimal model} of $D$ 
if there is a birational map $\phi\colon X\bir Y$ 
such that $\phi^{-1}$ does not contract divisors, $Y$ is $\Q$-factorial,
${D}_Y:=\phi_*D$ is nef, 
and there is a common resolution $g\colon W\to X$ and $h\colon W\to Y$ such that 
$E:=g^*D-h^*D_Y$ is effective and $\Supp g_*E$ is equal to the union of the 
exceptional divisors of $\phi$. 
If ${D}_Y$ is semi-ample, we call ${Y}$ 
a \emph{good} minimal model of $D$. 
A standard way of obtaining a minimal model of $D$ is to run a \emph{minimal 
model program} on $D$, if possible. Such a program can be defined exactly 
as in the case $D=K_X+B$ for a klt pair $(X,B)$. 

When $(X,B)$ is klt and $D=K_X+B$ the above definition of minimal model  
is equivalent to the usual definition of log minimal models. 
Moreover, if $K_X+B$ is pseudo-effective 
and $B$ is big, then $K_X+B$ has a good log minimal model by \cite{bchm}.

\textbf{Section rings.}
For a $\Q$-divisor $D$ on a
normal projective variety $X$, the \emph{section ring} of $D$ is defined as
$$
R(X,D)=\bigoplus_{0\le m\in \Z} H^0(X,\rddown{mD})
$$
If  $\phi\colon X\bir Y$ is a partial minimal model program on $D$, i.e. a finite 
sequence of divisorial contractions and flips with respect to $D$,
then there is a common resolution $g\colon W\to X$ and $h\colon W\to Y$ such that 
$E:=g^*D-h^*{D}_Y$ is effective and $\Supp g_*E$ is equal to the union of the 
exceptional divisors of $\phi$. In particular, this implies that 
 there is a natural isomorphism
$$
R(X,D)\simeq R(Y,D_Y)
$$

\textbf{Kodaira dimension.}
Let $f\colon X\to Z$ be a contraction of normal varieties and $D$ a $\Q$-Cartier $\Q$-divisor 
 on $X$. By $\kappa(D/Z)$ we mean the Kodaira dimension of $D|_F$ where 
$F$ is the generic fibre of $f$.\\

\section{Proof of Theorem \ref{t-main}}

In this section, we prove the following result which is of independent interest and it
immediately implies Theorem \ref{t-main}. The proof also works for $\R$-divisors but 
for simplicity we only consider $\Q$-divisors.

\begin{prop}\label{p-aux}
Let $(Y,\Delta_Y)$ be a projective klt pair with $\Delta_Y$ a $\Q$-divisor, $g\colon Y\to T$ a contraction,
$M_T$ a nef and big $\Q$-divisor on $T$, and $M_Y=g^*M_T$. Assume that $b\ge 0$ is a rational number
such that
$$
K_Y+\Delta_Y+bM_Y\sim_\Q D_Y ~~\mbox{for some $D_Y\ge 0$}
$$
 Then, we can choose $D_Y\ge 0$ in its $\Q$-linear equivalence class such that
there exist a resolution $\phi\colon W\to Y$ and
a $\Q$-divisor $G$ on $W$ such that

$\bullet$ $0\le G\le \phi^*D_Y$, 

$\bullet$ $\alpha \phi^*M_{Y}+G$ is semi-ample for any $\alpha\gg 0$, and 

$\bullet$ $R(W,lG)\simeq R(Y,lD_Y)$ for some integer $l>0$.
\end{prop}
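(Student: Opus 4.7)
The plan is to apply Kodaira's lemma to $M_T$, pass to a log resolution of $Y$ on which the boundary absorbs the pullback of an ample divisor from $T$, and then run a minimal model program to a good minimal model whose semi-ample divisor pulls back to $G$. Since $M_T$ is nef and big, Kodaira's lemma gives, for any $\delta>0$, a decomposition $M_T\sim_\Q H_T+\delta E_T$ with $H_T$ an ample $\Q$-Cartier divisor (depending on $\delta$) and $E_T\geq 0$. Pulling back yields $bM_Y\sim_\Q bg^*H_T+b\delta g^*E_T$, so
$$
D_Y\sim_\Q K_Y+\Delta_Y+bg^*H_T+b\delta g^*E_T.
$$
I would then replace $H_T$ by a general representative $H_T'\sim_\Q H_T$ in a very divisible linear system, so that the pullback of $g^*H_T'$ to a log resolution has arbitrarily small coefficients.

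Next, take a log resolution $\phi\colon W\to Y$ of $\Supp(\Delta_Y+D_Y+g^*H_T'+g^*E_T)$, and write $K_W+\Delta_W=\phi^*(K_Y+\Delta_Y)+F_W$ with $(W,\Delta_W)$ klt and $F_W\geq 0$ exceptional over $Y$. For $\delta$ small and $H_T'$ sufficiently general, the pair $(W,\Delta_W^*)$ with
$$
\Delta_W^*:=\Delta_W+b\phi^*g^*H_T'+b\delta\phi^*g^*E_T
$$
is klt, and $K_W+\Delta_W^*\sim_\Q\phi^*D_Y+F_W$. I would then run a $(K_W+\Delta_W^*)$-MMP, arranged via scaling so that divisors exceptional over $Y$ are contracted first, and terminate it with a good minimal model $Y'$ on which $K_{Y'}+\Delta_{Y'}^*$ is semi-ample. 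This MMP step is the main obstacle: $\Delta_W^*$ is not big on $W$ in general (the pullback $b\phi^*g^*H_T'$ is only big when $g\phi$ is birational), so BCHM does not apply directly. The intended strategy is first to run a relative MMP over $T$ (where on generic fibers of $g\phi$ the boundary has the correct structure for BCHM-type termination), and then to use the ample pullback $b\phi^*g^*H_T'$ present in the boundary together with the base-point free theorem to upgrade from a relative good minimal model to an absolute one.

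Granting such a $Y'$, take a common resolution $\psi\colon W'\to W$ and $\psi'\colon W'\to Y'$, and set $G:=\psi'^*D_{Y'}^*$ where $D_{Y'}^*:=K_{Y'}+\Delta_{Y'}^*$. Then $G$ is semi-ample as a pullback of a semi-ample divisor; by the arrangement that $F_W$ is contracted first, $\psi^*F_W$ is dominated by the MMP-exceptional part on $W'$, yielding $0\leq G\leq\psi^*\phi^*D_Y$; and $R(W',lG)\simeq R(Y',lD_{Y'}^*)\simeq R(Y,lD_Y)$ for divisible $l$ by MMP-invariance of section rings. Finally, the semi-ample $D_{Y'}^*$ defines an Iitaka fibration $\eta\colon Y'\to S$ with $D_{Y'}^*=\eta^*A_S$ for an ample $A_S$; since $\Delta_{Y'}^*$ contains the ample pullback $bg'^*H_T'$, a factorization argument analyzing the klt Calabi-Yau structure on $\eta$-fibers $F$ (where $\Delta_F^*\geq bg'^*H_T'|_F$ and $K_F+\Delta_F^*\sim_\Q 0$) produces a morphism $h\colon S\to T$ with $g'=h\circ\eta$. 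Then
$$
\alpha M_{Y'}+D_{Y'}^*=\eta^*(\alpha h^*M_T+A_S),
$$
and $\alpha h^*M_T+A_S$ is ample on $S$ for every $\alpha\geq 0$ (nef plus ample), so pulling back via $\eta\psi'$ yields semi-ampleness of $\alpha\phi^*M_Y+G$ on $W'$. Replacing $W$ by $W'$ (a higher resolution of $Y$) completes the construction.
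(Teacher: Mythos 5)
There is a genuine gap at the central step, and you have put your finger on it yourself: the existence of the good minimal model. Your proposed repair --- ``first run a relative MMP over $T$, then upgrade using the ample pullback and the base-point-free theorem'' --- does not work, because the relative good minimal model over $T$ is itself not known to exist in this generality. The divisors $b\phi^*g^*H_T'$ and $b\delta\phi^*g^*E_T$ are $\Q$-linearly trivial over $T$, so over $T$ you are asking for a good minimal model of $K_W+\Delta_W$ with $\kappa(K_W+\Delta_W/T)\ge 0$ but with no bigness over $T$; this is essentially relative abundance, which is open. The paper's key idea, which your proposal is missing, is to first pass to the \emph{relative Iitaka fibration} $Y\to S'/T$ and apply the Fujino--Mori canonical bundle formula, producing a klt pair $(S,\Delta_S)$ plus a nef $L_S$ with $K_S+\Delta_S+L_S$ \emph{big over $T$} and with the same section rings. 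Only then does the bigness of $M_T$ enter: since $D_S$ is big over $T$ and $M_T$ is big, $D_S+aM_S$ is big absolutely, and a small multiple of it can be absorbed into a klt \emph{big} boundary $\Gamma_{S,\lambda}$, so that \cite{bchm} applies on $S$. Your Kodaira-lemma decomposition of $M_T$ on the total space does not substitute for this reduction.

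There is a second gap in your final step. The factorization $g'=h\circ\eta$ of the structure map through the Iitaka fibration of $K_{Y'}+\Delta_{Y'}^*$ is not justified, and it visibly fails when $b=0$: in that case $\Delta_{Y'}^*$ contains no pullback from $T$ at all, so your ``klt Calabi--Yau structure on $\eta$-fibers'' argument has nothing to work with, yet the proposition still demands that $\alpha\phi^*M_Y+G$ be semi-ample for $\alpha\gg 0$. The paper handles this uniformly and differently: it introduces $\lambda\ge 0$ with $b+\lambda>0$, runs the LMMP on $K_S+\Gamma_{S,\alpha}$ for $\alpha\gg\lambda$, and uses Kawamata's bound on the length of extremal rays to show that $M_S$ is numerically trivial on every contracted ray, so that $M$ descends to the minimal model $\overline{S}$ and $D_{\overline{S}}+\alpha M_{\overline{S}}$ is semi-ample for all large $\alpha$. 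You would need an argument of this kind (or a genuine proof of your factorization claim covering $b=0$) to close this step. The bookkeeping at the end --- the negativity-lemma comparison giving $0\le G\le\phi^*D_Y$ after replacing $D_Y$ in its $\Q$-linear equivalence class --- is in the right spirit but cannot be completed until the two gaps above are filled.
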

\begin{proof}
For any morphism $U\to T$, $M_U$ will denote the pullback of $M_T$.

\emph{Step 1.} Let $\pi\colon Y'\to Y$ be any log resolution of $(Y,\Delta_Y)$. There is
an exceptional$/Y$ $\Q$-divisor
$E'\ge 0$ and a boundary $\Delta_{Y'}$ such that $(Y',\Delta_{Y'})$ is klt and
$$
K_{Y'}+\Delta_{Y'}=\pi^*(K_Y+\Delta_Y)+E'
$$
Let $D_{Y'}=\pi^* D_Y+E'$. Then, 
$$
K_{Y'}+\Delta_{Y'}+bM_{Y'}\sim_\Q D_{Y'}
$$
Now assume that the result holds on $Y'$, that is, assume that
we can choose $D_{Y'}\ge 0$ in its $\Q$-linear equivalence class such that
there exist a resolution $\psi\colon W\to Y'$ and
a $\Q$-divisor $G$ on $W$ satisfying:

$\bullet$ $0\le G\le \psi^*D_{Y'}$, 

$\bullet$ $\alpha M_{W}+G$ is semi-ample for any $\alpha\gg 0$, and 

$\bullet$ $R(W,lG)\simeq R(Y',lD_{Y'})$ for some integer $l>0$.\\

We will show that the result also holds for $Y$ by taking $\phi=\pi\psi$.
Since $D_{Y'}=\pi^* D_Y+E'$ and $E'\ge 0$ is exceptional/$Y$,
$$
R(Y,lD_{Y})\simeq R(Y',lD_{Y'})\simeq R(W,lG)
$$ 
It only remains to show that $G\le \phi^*D_{Y}$.
Put $C=\phi^*D_{Y}-G$. Since $E'$ is exceptional$/Y$,
$$
\phi_*C=\phi_*(\phi^*D_{Y}-G)=\phi_*(\phi^*D_{Y}+\psi^*E'-G)=\phi_*(\psi^*D_{Y'}-G)\ge 0
$$
On the other hand, $\alpha M_{W}+G$ is semi-ample for any $\alpha\gg 0$ and
$M_W\sim_\Q 0/Y$ hence $G$ is semi-ample over $Y$. So, $C$ is antinef over $Y$
which implies that $C\ge 0$ by the negativity lemma. Therefore, $G\le \phi^*D_{Y}$.\\

\emph{Step 2.}
Since  $M_Y\sim_\Q 0/T$,  $K_Y+\Delta_Y\sim_\Q D_Y/T$ hence  $\kappa(K_Y+\Delta_Y/T)\ge 0$.
By taking a log resolution and applying Step 1 we could assume that the relative Iitaka fibration of
$K_Y+\Delta_Y$ over $T$ is a morphism $Y\to S'/T$ where $S'$ is smooth.
In particular, this means that $\kappa(K_Y+\Delta_Y/S')=0$.
By applying Fujino-Mori \cite{fm00}, we can find a commutative diagram
$$
\xymatrix{
  & V \ar[ld]^\tau \ar[rd]^h & \\
 Y \ar[rd]^g & & S\ar[ld]^\theta \\
  & T &
}
$$
where $\tau$ is a resolution, and $h,\theta$ are contractions
such that we have the following data:

$\bullet$ a klt $(S,\Delta_S)$ where  $\Delta_S$ is a $\Q$-divisor,

$\bullet$ a nef $\Q$-divisor $L_S$ on $S$,

$\bullet$ $K_S+\Delta_S+L_S$ is big$/T$,

$\bullet$ $\kappa(K_{Y}+\Delta_{Y}/T)=\kappa(K_S+\Delta_S+L_S/T)=\dim S-\dim T$,

$\bullet$ and
$$
\tau^*(K_{Y}+\Delta_{Y})+R^-\sim_\Q h^*(K_S+\Delta_S+L_S)+R^+
$$
where $R^+,R^-$
are effective $\Q$-divisors with $R^-$ exceptional$/Y$, $h(R^-)$ has no
codimension one components, and
$h_*\x{O}_V(\rddown{iR^+})=\x{O}_S$ for every $i>0$.

 The above properties imply that for any rational number $\alpha\ge 0$, we have
 
\begin{equation*}
\begin{split}
 \tau^*D_Y+R^-+\alpha M_V & \sim_\Q \tau^*(K_{Y}+\Delta_{Y}+bM_Y)+R^-+\alpha M_V\\
&= \tau^*(K_{Y}+\Delta_{Y})+R^-+bM_V+\alpha M_V\\
& \sim_\Q h^*(K_S+\Delta_S+L_S)+R^++bM_V+\alpha M_V\\
& = h^*(K_S+\Delta_S+L_S+bM_S+\alpha M_S)+R^+\\
\end{split}
\end{equation*}

\emph{Step 3.}
Fix a rational number $\lambda\ge 0$ so that $b+\lambda>0$. 
By construction
$$
0\le \kappa(D_Y)=\kappa(K_{Y}+\Delta_{Y}+bM_{Y})=\kappa(K_S+\Delta_S+L_S+bM_S)
$$
so there is a $\Q$-divisor $D_S\ge 0$ such that
$$
D_S\sim_\Q K_S+\Delta_S+L_S+bM_S
$$
Since $D_S$ is big$/T$ and
$M_T$ is big, there is $a>0$ such that $D_S+aM_S$ is big. Thus, we have
$$
D_S+aM_S\sim_\Q A+B
$$
where $A$ is ample and $B\ge 0$.

 Pick $\epsilon>0$ sufficiently small. This ensures that 
$$
\beta_\lambda:=\epsilon \lambda-\epsilon a+b+\lambda\ge 0
$$
Moreover, since $A$ is ample and $L_S$ and $M_S$ are nef,
there is a big boundary $\Gamma_{S,\lambda}$ such that $(S,\Gamma_{S,\lambda})$ is klt and such that
 we can write
\begin{equation*}
\begin{split}
K_S+\Gamma_{S,\lambda} &  \sim_\Q K_S+\Delta_S+L_S+\epsilon A+\epsilon B +\beta_\lambda M_S\\
& \sim_\Q  K_S+\Delta_S+L_S+\epsilon D_S+\epsilon aM_S+\beta_\lambda M_S\\
& \sim_\Q  K_S+\Delta_S+L_S+b M_S+\epsilon D_S+\epsilon \lambda M_S+\lambda M_S \\
& \sim_\Q  (1+\epsilon)(K_S+\Delta_S+L_S+b M_S) + (1+\epsilon)\lambda M_S\\
& \sim_\Q (1+\epsilon)(K_S+\Delta_S+L_S+b M_S+\lambda M_S)\\
& \sim_\Q (1+\epsilon)(D_S+\lambda M_S)
\end{split}
\end{equation*}
By \cite{bchm}, we can run an LMMP on $K_S+\Gamma_{S,\lambda}$
which ends up with a good log minimal model of $K_S+\Gamma_{S,\lambda}$. By the relations above,
this also produces a good minimal model of $K_S+\Delta_S+L_S+b M_S+\lambda M_S$.

For any $\alpha\ge \lambda$, we have 
$$
K_S+\Gamma_{S,\alpha}\sim_\Q K_S+\Gamma_{S,\lambda}+ (1+\epsilon)(\alpha-\lambda)M_S
$$
If $\alpha\gg \lambda$ and if we run an LMMP on $K_S+\Gamma_{S,\alpha}$, then 
$M_S$ is numerically trivial on each extremal ray contracted in the process: 
this follows from the boundedness of the length of extremal rays due to Kawamata \cite{ka91}; 
indeed if $R$ is an extremal ray such that $(K_S+\Gamma_{S,\alpha})\cdot R<0$, then 
$(K_S+\Gamma_{S,\lambda})\cdot R<0$ and there is a rational curve $C$ generating $R$  
such that 
$$
-2\dim S\le (K_S+\Gamma_{S,\lambda})\cdot C<0
$$ 
If $\alpha\gg \lambda$  
and if $M_S\cdot C>0$, then  $(K_S+\Gamma_{S,\alpha})\cdot C>0$, a contradiction.
The same argument applies in each step of the LMMP because the Cartier index of $M_S$ is 
preserved by the LMMP (by Cartier index of M$_S$ we mean the smallest natural number $n$ 
such that $nM_S$ is Cartier). Therefore,  
the LMMP on $K_S+\Gamma_{S,\alpha}$ is also an LMMP on $K_S+\Gamma_{S,\alpha'}$ 
for any $\alpha'\ge \alpha$.
In particular, if $\overline{S}$ is the good minimal model of $K_S+\Gamma_{S,\alpha}$ 
obtained by the LMMP above, then 
$\overline{S}$ is also a good minimal model of $K_S+\Gamma_{S,\alpha'}$ for any $\alpha'\ge \alpha$.\\

\emph{Step 4.}
Let $\overline{S}$ be the model constructed in Step 3.
 By construction, there is a
commutative diagram
$$
\xymatrix{
&& W \ar[ld]^\mu \ar[rd]^e &&\\
  & V \ar[ld]^\tau \ar[rd]^h & & \overline{W} \ar[ld]^c \ar[rd]^{d} &\\
 Y \ar[rd]^g & & S \ar[ld]^\theta \ar@{-->}[rr] & & \overline{S}\\
  & T &  &
}
$$
where $\mu$ is a resolution, $e$ is a contraction, and $c,d$ are also resolutions.
By Step 3, we have $e^*d^*M_{\overline{S}}=e^*c^*M_S=M_W$ because $M_S$ is numerically trivial on each step of 
the LMMP that produced $S\bir \overline{S}$. Moreover, 
$D_{\overline{S}}+\alpha M_{\overline{S}}$ is semi-ample for any $\alpha\gg 0$.\\

\emph{Step 5.} In general, $D_{\overline{S}}$
may not be nef. However, the LMMP on $K_S+\Gamma_{S,\alpha}$  in Step 3 is a partial LMMP on $D_S$, that is, 
each step of the LMMP on $K_S+\Gamma_{S,\alpha}$ is also a step of an LMMP on $D_S$ but it may not be 
a full LMMP on $D_S$.
In any case, there is a $\Q$-divisor $\overline{P}\ge 0$ on $\overline{W}$ which is exceptional$/\overline{S}$ and
$$
c^*D_S=\overline{P}+d^*D_{\overline{S}}
$$
Now put $G=e^*d^*D_{\overline{S}}$ and $\phi:=\tau\mu$.
Then, for some integer $l>0$, we have natural isomorphisms 
$$
R(Y,lD_Y)\simeq R(S,lD_S)\simeq R(\overline{S},lD_{\overline{S}})\simeq R(W,lG)
$$

By construction,
$\alpha M_W+G$ is semi-ample for any $\alpha\gg 0$.
Since $M_W\sim_\Q 0/Y$,
we deduce that $G$ is semi-ample over $Y$.
Moreover,
\begin{equation*}
\begin{split}
 \phi^*D_Y+\mu^*R^- & \sim_\Q \mu^*h^*(K_S+\Delta_S+L_S+ bM_S)+\mu^*R^+ \\
  & \sim_\Q e^*c^*D_S+\mu^*R^+\\
  & = e^*\overline{P}+G+\mu^*R^+
\end{split}
\end{equation*}
Put $D_Y':=\phi_*(e^*\overline{P}+G+\mu^*R^+)$. Since $e^*\overline{P}+G+\mu^*R^+\ge 0$, we get 
$D_Y'\ge 0$, and since $R^-$ is exceptional$/Y$, we get $D_Y'\sim_\Q D_Y$. So,
$$
 \phi^*D_Y'+\mu^*R^-\sim_\Q e^*\overline{P}+G+\mu^*R^+
$$
and by the negativity lemma we have equality
$$
 \phi^*D_Y'+\mu^*R^-=e^*\overline{P}+G+\mu^*R^+
$$
Let $N=\phi^*D_Y'-G$. Then,
$$\phi_*N=\phi_*(N+\mu^*R^-)=\phi_*(e^*\overline{P}+\mu^*R^+)\ge 0
$$
On the other hand, we know that
$G$ is nef over $Y$ so $N$ is anti-nef over $Y$. Therefore, by the
negativity lemma we deduce $N\ge 0$ hence
$\phi^*D_Y'\ge G$. Now replace $D_Y$ with $D_Y'$ and this completes the proof of the proposition.
\end{proof}

\begin{proof}(of Theorem \ref{t-main})
This follows from Proposition \ref{p-aux} using the same notation.
\end{proof}

\section{Fujino-Gongyo conjecture}

As mentioned earlier the smoothness assumption of $f$ in Theorem \ref{t-FG-conj} cannot be 
removed. 
If $f:X\to Z$ is a surjective morphism between smooth projective varieties with $-K_X$ nef, then  Zhang \cite{zh96} proved that either $Z$ is uniruled or $\kappa(Z)=0$. In particular, if $\dim Z=1$, then $-K_Z$ is semi-ample. Therefore, a counter-example to Fujino-Gongyo's conjecture without the smoothness assumption on $f$ is possible only when $\dim Z\geq 2$. We give few counter-examples 
of different flavours.

\begin{exa}
Let $B'$ be  the $(4,4)$-divisor on $Z'=\mathbb{P}^1\times \mathbb{P}^1$ which consists of 4 lines vertical with respect to the 
first projection and 4 lines vertical with respect to the second projection. Let $\sigma \colon Z\rightarrow Z'$ be the blow up of the 16 intersection points in $B'$. Let $B$ be the strict transform of $B'$. Then $\sigma^*(B')=B+\sum_{i=1}^{16}2E_i$, where $E_i$ are the irreducible exceptional divisors over the 16 points. So $B$ is an even divisor in $Z$, i.e., $B\sim2L$ for some divisor $L$ in $Z$. Let $f \colon X\rightarrow Z$ be the double cover ramified over $B$. We will show that $-K_{X} $ is semi-ample but $-K_{Z}$ is not semi-ample.

By construction, $K^2_{Z'}=8$ and from $K_{Z}=\sigma^*K_{Z'}+\sum_{i=1}^{16}E_i$ we get $K^2_{Z}=-8$. Therefore, $-K_{Z}$ is not nef, and thus not semi-ample. Since $f:X\rightarrow Z$ is the double cover ramified over $B$, $K_{X}\sim_\Q f^*(K_{Z}+L)$.  On the other hand, 
$$
K_{Z}+L\sim_\Q \sigma^*K_{Z'}+\frac{1}{2}(\sigma^*(B')-B)+\frac{1}{2}B=\sigma^*(K_{Z'}+\frac{1}{2}B')\sim_\Q 0
$$
So $-K_{X}\sim_\Q 0$ is semi-ample. In this example $f$ is a flat morphism but not a contraction.
\end{exa}

\begin{exa}
 Let $r$ and $s$ be positive integers. Let $\x{E}=\x{O}_{\mathbb{P}^s}\oplus\x{O}_{\mathbb{P}^s}(1)^{r+1}$  and let $Z_{r,s}$ be the smooth $(r+s+1)$-dimensional variety $\mathbb{P}(\x{E})$. The projection $\pi_{r,s}\colon Z_{r,s}\rightarrow \mathbb{P}^s$ has a section $\sigma$ with image $P_{r,s}\subset Z_{r,s}$ corresponding to the trivial quotient $\x{E}\to \x{O}_{\mathbb{P}^s}$.  Let $f\colon X_{r,s}\rightarrow Z_{r,s}$ be the blow-up of $P_{r,s}$. One can check by calculating intersection numbers and using Kleiman's ampleness criterion that 
 when $r>s$ the divisor $-K_{X_{r,s}}$ is ample but $-K_{Z_{r,s}}$ is not nef. For details see \cite{de01} Example 1.36 and Example 3.16 (2). 
In this example $f$ is a contraction but not a flat morphism.
\end{exa}

For the convenience of the reader we reproduce 
another counter-example due to Hiroshi Sato given in \cite{fg11}, Example 4.6. In this case $f$ would be 
both a contraction and a flat morphism.

\begin{exa}[H. Sato]\label{exa-counter} 
Let $\Sigma$ be the fan in $\mathbb{R}^3$ whose rays are generated by
$$\begin{array}{l}x_1=(1,0,1), x_2=(0,1,0),x_3=(-1,3,0),x_4=(0,-1,0),\\
y_1=(0,0,1),y_2=(0,0,-1),\end{array}$$
and their maximal cones are
$$\begin{array}{c}\langle x_1,x_2,y_1\rangle,\langle x_1,x_2,y_2\rangle, \langle x_2,x_3,y_1\rangle,\langle x_2,x_3,y_2\rangle,\\
\langle x_3,x_4,y_1\rangle,\langle x_3,x_4,y_2\rangle, \langle x_4,x_1,y_1\rangle,\langle x_4,x_1,y_2\rangle.
\end{array}$$
Let $\Delta$ be the fan obtained from $\Sigma$ by successive star subdivisions along the rays spanned by $$z_1=x_2+y_1=(0,1,1)$$ and
$$z_2=x_2+z_1=2x_2+y_1=(0,2,1).$$
We can see that $V=X_\Sigma$, the toric threefold corresponding to the fan $\Sigma$ with respect to the lattice $\mathbb{Z}^3\subset \mathbb{R}^3$, is a $\mathbb{P}^1$-bundle over $Z=\mathbb{P}_{\mathbb{P}^1}(\x{O}_{\mathbb{P}^1}\oplus\x{O}_{\mathbb{P}^1}(3))$. The $\mathbb{P}^1$-bundle structure $V\rightarrow Z$ is induced by the projection $\mathbb{Z}^3\rightarrow \mathbb{Z}^2:(x,y,z)\mapsto (x,y)$. The toric variety $X=X_\Delta$ corresponding to the fan $\Delta$ was obtained by successive blow-ups from $V$. The maximal cones of $X_\Delta$ are:
$$\begin{array}{c} \tau_1=\langle x_1,x_2,z_2\rangle, \tau_2=\langle x_1,z_1,z_2\rangle, \tau_3=\langle x_1,z_1,y_1\rangle,\\
\tau_4=\langle x_3,y_1,z_1\rangle, \tau_5=\langle x_3,z_2,z_1\rangle, \tau_6=\langle x_3,z_2,x_2\rangle,\\
\sigma_3=\langle x_1,x_2,y_2\rangle,  \sigma_4=\langle x_2,x_3,y_2\rangle, \sigma_5=\langle x_3,x_4,y_1\rangle,\\
 \sigma_6=\langle x_3,x_4,y_2\rangle, \sigma_7=\langle x_4,x_1,y_1\rangle, \sigma_8=\langle x_4,x_1,y_2\rangle.
\end{array}$$

 To check  $-K_X$ is semi-ample, we can either apply  the base-point-free criterion on toric varieties (see \cite{cls} Theorem 6.1.7), or the fact that on toric varieties,  semi-ampleness of a  divisor is equivalent to nefness (see \cite{cls} Theorem 6.3.12). Here we apply the base-point-free criterion (see Remark \ref{rem-bpf-toric}).

We have $-K_X=\sum_{i=1}^4D_{x_i}+D_{y_1}+D_{y_2}+D_{z_1}+D_{z_2}$, where $D_{u}$ denotes the invariant Cartier divisor corresponding to the ray generated by the point $u$. The Cartier data associated to $-K_X$ are as follows:
$$\begin{array}{l}m_{\tau_1}=m_{\tau_6}=m_{\sigma_3}=m_{\sigma_4}=(-2,-1,1),\ \ \ \ \ m_{\tau_2}=m_{\tau_3}=(0,0,-1),\ \ \ \ \\\ m_{\tau_4}=m_{\tau_5}=(1,0,-1),\ \ m_{\sigma_5}=(4,1,-1),\ \ \ \ \ m_{\sigma_6}=(4,1,1),\ \ \\\ m_{\sigma_7}=(0,1,-1),\ \ \ \ \ m_{\sigma_8}=(-2,1,1).
\end{array}$$

One can check that all $m_{\tau_i},m_{\sigma_j}$ are in  $P_{-K_X}$. So $-K_X$ is base point free. Moreover, 
 $-K_X$ is  big as $X$ is a projective toric variety. So $X$ is a toric weak Fano manifold.  The morphism $f:X\rightarrow Z$ induced by the projection $\mathbb{Z}^3\rightarrow \mathbb{Z}^2$ is a flat morphism onto $Z$ since every fiber of $f$ is one-dimensional. 

On the other hand, $-K_Z$ is not nef since $-K_Z\cdot C<0$ where 
$C\subset Z$ is the image of the section of $Z\to \PP^1$ corresponding to the trivial quotient 
$\x{O}_{\mathbb{P}^1}\oplus\x{O}_{\mathbb{P}^1}(3)\to \x{O}_{\mathbb{P}^1}$.
\end{exa}

\begin{rem}\label{rem-bpf-toric}
Here we recall some basic facts about toric varieties. 

(1) Base point freeness on toric varieties: We follow the notation of \cite{cls}. Let $X_\Sigma$ be a complete toric variety of dimension $n$ and 
 let $D=\sum_{\rho}a_{\rho}D_{\rho}$ be a torus-invariant Cartier divisor on $X_{\Sigma}$. We define a polytope 
$$
P_D=\{m\in M_{\mathbb{R}}|\langle m,u_{\rho}\rangle\geq -a_{\rho}\text{ for all }\rho\in \Sigma(1)\}\subset M_{\mathbb{R}}
$$ 
Let $\{m_{\sigma}\}_{\sigma\in \Sigma(n)}$ be the Cartier data associated to $D$, where $m_{\sigma}\in M$ with $\langle m_{\sigma},u_{\rho}\rangle=-a_{\rho},\text{ for all }\rho\in \sigma(1)$. Then,  $D$ is base point free if and only if $m_{\sigma}\in P_D$ for all $\sigma\in \Sigma(n)$.

(2) Kodaira dimension: Now assume that $D$ is nef. Then, the Kodaira dimension $\kappa(D)=\dim P_D$. 
In particular, if $X_{\Sigma}$ is projective and $D=-K_{X_\Sigma}$ is nef, then it is not difficult 
to see that $\dim P_{D}=n$ hence $D$ is nef and big.\\
\end{rem}

\begin{proof}(of Theorem \ref{t-FG-conj})
By Lemma 2.4 of \cite{fg11}, we can assume that $f$ is a contraction: 
this is achieved by taking the Stein factorisation which preserves the smoothness assumption. 
We may assume that $\dim X>\dim Z$ otherwise the theorem holds trivially.
Pick a closed point $z\in Z$.
Since $-K_X$ is semi-ample, for some $m>1$ we can find a divisor $D\in |-mK_X|$ such that 
if we put $B=\frac{1}{m}D$ then we have: $K_X+B\sim_\Q 0$, 
$(X,B)$ is klt, and $D=\Supp B$ is  smooth (note that it is also possible 
to have $B=0$ depending on the situation). 
Since $f$ is smooth, we can choose $D$ so that there is a neighbourhood
$U$ of $z$ such that $D=\Supp B$ is relatively smooth over $U$.
In particular, this ensures that the discriminant part $B_Z=0$ near $z$.

Let $\sigma\colon Y\to Z$ be an Ambro model for $f$ and $(X,B)$. 
Then, possibly after shrinking $U$ around $z$, we have $M_Y=\sigma^*M_Z$ 
over $U$ by \cite{fg11}, Remark 4.3, or by \cite{ko07}, Proposition 8.4.9 (3) 
and Theorem 8.5.1.
 Since $K_X+B\sim_\Q 0$,
$$
K_Z+B_Z+M_Z\sim_\Q 0 ~~ \mbox{and} ~~ K_Y+B_Y+M_Y\sim_\Q 0
$$
 and
$$
K_Y+\Delta_Y+M_Y \sim_\Q D_Y:=E
$$
where $\Delta_Y$ and $E$ are as in the Introduction.
Since $E$ is exceptional$/Z$, $D_Y$ is the only effective $\Q$-divisor in its
$\Q$-linear equivalence class. Moreover, we have $\kappa(K_Y+\Delta_Y/T)\ge 0$ since 
$$
\kappa(K_Y+\Delta_Y+M_Y)=\kappa(D_Y)\ge 0
$$ 
where $Y\to T$ is again as in the 
Introduction.

By Theorem \ref{t-main}, there is a resolution
$\phi\colon W\to Y$ and a $\Q$-divisor $G\ge 0$ such that $\alpha M_{W}+G$ is semi-ample for any $\alpha \gg 0$
and $G\le \phi^*D_Y=\phi^*E$.
In particular,
this means that $\alpha M_{W}+G$ is semi-ample over $U$ for $\alpha \gg 0$.
Since over $U$ we have $M_W=\phi^*\sigma^*M_Z$, we deduce that $G$ is semi-ample over $U$.
But $G$ is effective and exceptional over $U$ hence by the negativity lemma it is zero over $U$, that is,
$G$ is mapped into $Z\setminus U$. Thus, we can find a rational number $\alpha\gg 0$ and a  
$\Q$-divisor $P_W\ge 0$ such that $\alpha M_{W}+G\sim_\Q P_W$ and that $P_W$ does not contain any 
irreducible component of $(\sigma\phi)^{-1}\{z\}$. But $P_W$ restricted to $(\sigma\phi)^{-1}\{z\}$ 
is numerically trivial so $P_W$ does not intersect $(\sigma\phi)^{-1}\{z\}$.
Therefore, 
$$
\alpha M_{Z}\sim_\Q P_Z:=\sigma_*\phi_*P_W\ge 0
$$ and $z$ does not belong to $\Supp P_Z$  which 
implies that the stable base locus of $M_Z$ does not
contain $z$. By construction, $\Supp B_Z$ also does not contain $z$. Therefore, $z$ is not 
in the stable base locus of $-K_Z$. Since $z$ was chosen arbitrarily, we deduce that 
$-K_Z$ is semi-ample.\\
\end{proof}


\vspace{2cm}

\flushleft{DPMMS}, Centre for Mathematical Sciences,\\
Cambridge University,\\
Wilberforce Road,\\
Cambridge, CB3 0WB,\\
UK\\
email: c.birkar@dpmms.cam.ac.uk

\vspace{1cm}

\flushleft{Institute} of Mathematics,\\
         Chinese Academy of Sciences,\\
         No. 55 Zhonguancun East Road,\\
         Haidian District, Beijing, 100190,\\
         China\\
email: yifeichen@amss.ac.cn

\end{document}